\numberwithin{equation}{section}
\def\be{\begin{equation}}
\def\ee{\end{equation}}
\def\ve{\varepsilon}
\def\nint{\mathop{\diagup\kern-13.0pt\int}}
\def\textnoint{\matpop {\raise1.25pt\hbox{${\ssize\diagup}$}\kern-9.5pt\int}}
\newtheorem{theorem}{Theorem} 
\newtheorem{corollary}[theorem]{Corollary}
\theoremstyle{remark}
\begin{document}
\date{\today}
\setcounter{section}{-1}

\title{Decoupling, exponential sums and the Riemann zeta function}
\author{J. Bourgain\thanks 
{This work was partially supported by NSF grants DMS-1301619}\\
Institute for Advanced Study, Princeton, NJ 08540\\
bourgain@math.ias.edu}
\date{\today}
\maketitle

\begin{abstract}{We establish a new decoupling inequality for curves in the spirit of \cite{B-D1},\cite{B-D2} which implies a new mean value
theorem for certain exponential sums crucial to the Bombieri-Iwaniec method as developed further in \cite{H}.
In particular, this leads to an improved bound $|\zeta(\frac{1}{2} + it)| \ll t^{13/84 + \varepsilon}$ for the zeta function on the
critical line.}
\end{abstract}

\section{Introduction}
The main result of the paper is the essentially sharp bound on the mean-value expression for $r = 6$
(see \cite {H} for details)
$$
A_r\Big(\frac{1}{N^2}, \frac{1}{N}\Big) = \int_{0}^{1} \int_{0}^{1} \int_{-1}^{1} \int_{-1}^{1}
 \left|
\sum_{1\le n \leq N} e(nx_1 + n^2x_2+ N^{\frac{1}{2}}n^{3/2}x_3 + N^{\frac{1}{2}}n^{\frac{1}{2}}x_4) \right|^{2 r} dx_1 dx_2 dx_3
dx_4.
\eqno{(0.1)}
$$

It is proven indeed that $A_6 \ll N^{6 + \varepsilon}$ (see Theorem 2 below).
The bound $A_5(\delta, \delta N) \ll \delta N^{7+\varepsilon}, \frac{1}{N^{2}} \leq \delta \leq \frac{1}{N}$, established in \cite{H-K},
plays a key role in the refinement of the Bombieri-Iwaniec approach \cite{B-I1} to bounding exponential sums as developed mainly by Huxley (see \cite{H}
for an expository presentation).
As pointed out in \cite{H}, obtaining good bounds on $A_6$ leads to further improvements and this objective was our main motivation.

In \cite {B}, we recovered the \cite {H-K} $A_5$-result (in fact in a sharper form) as a consequence of certain general decoupling inequalities
related to the harmonic analysis of curves in $\mathbb R^d$.
Those inequalities were derived from the results in \cite{B-D1} (see also \cite {B-D2}).
Theorem 2 will similarly be derived from a decoupling theorem, formulated as Theorem 1.

Let us next briefly recall the basic structure of the Bombieri-Iwaniec argument.
Given an exponential sum \hfill\break
$\sum_{m\sim M} e\big(TF(\frac mM)\big)$ with $T >M$ and $F$ a smooth function satisfying appropriate derivative
conditions, the sum $\sum_{m\sim M}$ is replaced by shorter sums
$\sum_{m\in I}$, $I$ ranging over size-$N$ intervals \big(here $N$ is a parameter to be
chosen and is not the same as in $(0.1)$\big).
For each $I$, the phase may be replaced by a cubic polynomial and, by Poisson summation, the exponential sum $\sum_{m\in I} e\big(TF(\frac
mM)\big)$ transformed (effectively) in a sum of the form
$$
\sum_{h\leq H} e\big(x_1(I)h+x_2(I) h^2+x_3(I) h^{3/2}+ x_4 (I) h^{1/2}\big)\eqno{(0.2)}
$$
where the vector $x(I) =\big(x_j(I)\big)_{1\leq j\leq 4} \in\mathbb R^4$ depends on the interval $I$.

At this point, one needs to analyze the distributions of
$$
(h, h^2, h^{3/2}, h^{1/2}) \quad (1\leq h\leq H) \eqno{(0.3)}
$$
and
$$
\text {the vector function $x(I)$ of the interval $I$} \eqno{(0.4)}
$$
which Huxley refers to as the first and second spacing problems.

Before applying a large sieve estimate, one takes an $r$-fold convolution of (0.3) whose $L^2$-norm is expressed by mean values of the form (0.1) 
with $N$ replaced by $H$.
Roughly speaking, the $L^2$-norm of the distribution (0.4) is bounded by a certain parameter $B$, whose evaluation is highly non-trivial and so far
sub-optimal.
The only input of this paper is to provide an optimal result for the first spacing problem (see Corollary 3 below).
Combined with available treatments of the second spacing problem, it leads to improved exponential sum estimates.

New exponential sum bounds are presented in \S3.
They are based on combining Corollary 3 with known estimates on the parameter $B$ from the second spacing problem (see the Acknowledgement below).

The conclusion is stated as Theorem 4.

\medskip
In \S4, we establish our new estimate on $|\zeta(\frac 12 +it)|$.

\medskip
\noindent
{\bf Theorem 5.}
$$
\Big|\zeta\Big(\frac 12+it\Big)\Big| \ll |t|^{\frac {13}{84}+\ve}.
\eqno{(0.5)}
$$
\medskip

It is implied by the classical approximate functional equation (cf. [T]) together with Theorem 4 and some further (known) exponential sum bounds.

Recall that the original Bombieri-Iwaniec argument provided the estimate
$\big|\zeta\big(\frac 12 + it\big)\big| \ll |t|^{\frac 9{56}+\ve}$, $\frac 9{56} = 0,16071$ (see \cite{B-I1} \cite{B-I2}).
The work of Huxley in \cite {H1} (resp. \cite {H2}) produced the exponents
$$
\frac{89}{570} = 0.15614...  \text{ and } \frac {32}{205} = 0.15609..., \text { resp}
$$
while our $A_6$-bound leads to the exponent $\frac {13}{84}= 0.15476...$, hence doubling the saving over $\frac 16$ obtained in [B-I1].
\medskip

In \S5 we highlight a new exponent pair that results from our work. 

\medskip

\noindent
{\bf Acknowledgement}

The author is grateful to C.~Demeter for various  comments leading to simplification of an earlier version.
Most importantly, he is greatly indebted to one of the referees for clarifying several matters related to the `second spacing problem' and
providing an alternative treatment of Section 3 in the original manuscript that moreover leads to an improved exponent in Theorem 5.
The presentation of the last 3 sections of the paper follows closely his suggestions.

\section
{A decoupling inequality for curves}

Let $\Phi =(\phi_1, \ldots, \phi_d): [0,1] \to\Gamma\subset \mathbb R^d$ be a smooth parametrization of a non-degenerate curve in $\mathbb
R^d$,
more specifically we assume the Wronskian determinant
$$
\det [\phi_j^{(s)} (t_s)_{1\leq j, s\leq d}]\not= 0 \text { for all $t_1, \ldots, t_d\in [0,1]$}.\eqno{(1.1)}
$$
Let us assume moreover that $d$ is even.
For $\Omega\subset \mathbb R^d$ a bounded set of positive measure, denote by 
$$\Vert f\Vert _{L^p_{\#}(\Omega)} = (\nint_\Omega|f|^pdx)^{\frac 1p}=\big(\frac 1{|\Omega|} \int_\Omega |f|^p
dx\big)^{\frac 1p}$$ the average $L^p$-norm and let $B_\rho$ be the $\rho$-cube in $\mathbb R^d$ centered at 0.
We prove the following decoupling property in the spirit of results in \cite {B}, \cite{B-D2}.

\begin{theorem}
Let $\Gamma$ be as above and $I_1, \ldots, I_{\frac d2}\subset [0, 1]$ subintervals that are $O(1)$-separated, let $N$ be large and $\{I_\tau\}$ a
partition of $[0, 1]$ in $N^{-\frac 12}$-intervals.
Then for arbitrary coefficient functions $a_j = a_j (t)$
$$
\begin{aligned}
&\Big\Vert\prod_{j=1}^{d/2} \Big|\int_{I_j} a_j(t) e\big(x.  \Phi (t) \big) dt\Big|^{2/d}\Big\Vert_{L_{\#}^{3d} (B_N)}\ll\\
&N^{\frac 16+\ve} \prod_{j=1}^{d/2} \Big[\sum_{\tau; I_\tau \subset I_j}\Big\Vert\int_{I_\tau} a_j(t) e\big(x.\Phi(t)\big)
dt\Big\Vert^6_{L_{\#}^6(B_N)}\Big]^{\frac 1{3d}}
\end{aligned}
\eqno{(1.2)}
$$
holds, with $\ve>0$ arbitrary.
\end{theorem}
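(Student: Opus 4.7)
The plan is to prove (1.2) by adapting the induction-on-scale framework of Bourgain--Demeter to this multilinear setting, exploiting the $O(1)$-separation of $I_1,\ldots,I_{d/2}$ together with the Wronskian condition (1.1) to feed a multilinear Brascamp--Lieb / Kakeya input, and invoking the classical parabolic $l^2$-decoupling on each individual factor.

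The first step is a parabolic localization. On an $N^{-1/2}$-arc $I_\tau$, Taylor-expand $\Phi(t_\tau+s)=\Phi(t_\tau)+s\Phi'(t_\tau)+\tfrac{s^2}{2}\Phi''(t_\tau)+O(s^3)$; for $|s|\lesssim N^{-1/2}$ and $x\in B_N$ the $O(s^3)$ terms contribute phase $O(N\cdot N^{-3/2})=O(N^{-1/2})$, which is harmless. Hence each $F_{j,\tau}(x):=\int_{I_\tau}a_j(t)e(x\cdot\Phi(t))\,dt$ behaves like a parabolic extension piece supported Fourier-wise near a translate of a standard parabola in the 2-plane $V_{j,\tau}=\mathrm{span}\{\Phi'(t_\tau),\Phi''(t_\tau)\}$. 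By (1.1) the $d/2$ planes attached to caps chosen from distinct $I_j$'s are quantitatively transverse and together span $\mathbb R^d$.

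The heart of the argument is induction-on-scale. Let $D(N)$ denote the best constant in (1.2); the base $N=O(1)$ is trivial. At the inductive step, pick an intermediate scale $1\ll M\ll N$, partition each $I_j$ into $M^{-1/2}$-sub-intervals $\sigma$, and combine two ingredients: (i) a multilinear Brascamp--Lieb inequality at $L^{3d}$ for $d/2$ transverse parabolic pieces in $\mathbb R^d$, which bounds the product by a product of per-factor $L^6$-type norms with only an $N^\ve$ loss; (ii) the classical parabolic $l^2$-decoupling at $L^6$, applied inside each factor to refine $\sigma$-pieces into $\tau$-pieces with $N^\ve$ loss. Parabolic rescaling then identifies the residual local problem at scale $M^{-1/2}$ with (1.2) at scale $(N/M)^{-1/2}$ and closes the bootstrap. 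The explicit factor $N^{1/6}$ enters at the very last step, when converting the $l^2$ sum produced by parabolic decoupling over the $\sim N^{1/2}$ caps $\tau\subset I_j$ to the $l^6$ sum required by (1.2): by H\"older,
$$\Big(\sum_\tau\|F_{j,\tau}\|^2_{L^6_{\#}(B_N)}\Big)^{1/2}\le (N^{1/2})^{1/2-1/6}\Big(\sum_\tau\|F_{j,\tau}\|^6_{L^6_{\#}(B_N)}\Big)^{1/6}=N^{1/6}\Big(\sum_\tau\|F_{j,\tau}\|^6_{L^6_{\#}(B_N)}\Big)^{1/6},$$
and this conversion is sharp.

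The main obstacle I anticipate is the multilinear Brascamp--Lieb input (i). The classical Bennett--Carbery--Tao multilinear Kakeya is formulated for $d$ transverse families of one-dimensional tubes in $\mathbb R^d$, while here there are only $d/2$ families, each confined to a 2-plane; producing a sharp $L^{3d}$ bound requires either establishing a Brascamp--Lieb inequality with the correct exponent vector (transversality supplied by (1.1)), or a two-step scheme where one first applies parabolic decoupling inside each 2-plane and then glues transversally across the $d/2$ planes. Matching the numerology precisely so that the induction closes at the advertised exponent $\tfrac16+\ve$, without accumulating more than $N^\ve$ of extra losses, is the delicate point.
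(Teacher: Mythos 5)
Your skeleton (induction on scales, the elementary transversality estimate coming from (1.1), 2D parabolic decoupling applied factor by factor, and H\"older converting $\ell^2$ over $\sim N^{1/2}$ caps into $\ell^6$ at the cost of exactly $N^{1/6}$) matches the architecture of the paper's proof, and your accounting of where the $N^{1/6}$ comes from is correct in spirit. But the step you yourself flag as the main obstacle --- a ``multilinear Brascamp--Lieb inequality at $L^{3d}$ for $d/2$ transverse parabolic pieces, bounding the product by per-factor $L^6$-type norms with only an $N^\ve$ loss'' --- is a genuine gap, and as formulated it is essentially equivalent in strength to the theorem you are trying to prove: an $L^{3d}$ bound for $\prod|F_j|^{2/d}$ by per-factor $L^6$ norms with only sub-polynomial loss, combined with single-factor decoupling, would already yield (1.2). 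No such off-the-shelf input exists for $d/2$ families of 2-dimensional pieces in $\mathbb R^d$, and the paper does not use one.

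What the paper does instead --- and what is missing from your proposal --- is an \emph{exact factorization}, not a Kakeya-type inequality. After localizing to a $K^{-1/2}$-interval $I_{\sigma_j}=[t_j,t_j+K^{-1/2}]$ in each $I_j$ (with $K=N^{2/3}$ so that the quadratic Taylor approximation of the phase is valid for $x\in B_N$; note your cubic-error estimate only works at the finest scale, and the constraint $N=o(K^{3/2})$ is what forces the recursion $b(N)\ll b(N^{2/3})N^\ve$), the phase in the $j$-th factor depends on $x$ only through the two linear functionals $x\cdot\Phi'(t_j)$ and $x\cdot\Phi''(t_j)$. Since there are exactly $d/2$ factors and, by (1.17), these $d$ linear functionals form a coordinate system on $\mathbb R^d$, the change of variables $x\mapsto\big(x\cdot\Phi'(t_1),\tfrac12 x\cdot\Phi''(t_1),\ldots\big)$ turns the average over $B_N$ of the $d/2$-fold product into a genuine product of $d/2$ independent two-dimensional integrals (display (1.24); the paper calls this ``the main point in the argument''). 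Only then is the 2D parabola decoupling (1.5) applied to each independent factor, decoupling from scale $K^{-1/2}$ to $N^{-1/2}$, and summing over the choices of $(\sigma_1,\ldots,\sigma_{d/2})$ closes the bootstrap. Your second suggested alternative (``decouple inside each 2-plane, then glue transversally'') gestures at this, but in the paper the transversality is used first and eliminates the need for any gluing inequality whatsoever. Without identifying this tensor-product structure, the proof does not close.
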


Here $e(z)$ stands for $e^{2\pi iz}$ as usual.
Strictly speaking, $L^6_{\#}(B_N)$ in the right hand side of (1.2) should be some weighted space $L^6_{\#}(w_N)$ with weight $1_{B_N}(x)\lesssim
w_N(x)\leq (1+\frac {|x|}N)^{-10d}$, supp\,$\widehat w_N\subset B_{\frac 1N}$ (cf. \cite{B-D1} and \cite {B-D2}).
For simplicity, this technical point will be ignored here and in the sequel.
\bigskip

Let us say a few words about the role of Theorem 1 in the proof of Theorem 2 stated below. First, the inequality in Theorem 2 can be rewritten as
$$\left(\frac{1}{|\tilde{\Omega}|}\int_{\tilde{\Omega}}|\sum_{n\le N}e(x\cdot\Phi(\frac{n}{N}))|^{12}dx\right)^{1/12}\ll N^{\frac12+\epsilon}$$
with $$
\tilde\Omega =[0, N]\times[0, N^2]\times [0, N^2]\times [0, N]
$$
and
$$\Phi(t)=(t,t^2,t^{3/2},t^{1/2}).$$

Theorem 1 implies via a standard discretization argument that the average over each ball $B_{N^2}$ with radius $N^2$ satisfies
$$\left(\frac{1}{|B_{N^2}|}\int_{|B_{N^2}|}\left[\sqrt{\prod_{j=1}^2|\sum_{n\in I_j}e(x\cdot\Phi(\frac{n}{N}))|}\;\;\right]^{12}dx\right)^{1/12}\ll N^{\frac12+\epsilon}.$$
This estimate is weaker than the one in Theorem 2 in two regards. First, in Theorem 2 one is interested in averages over the smaller region $\tilde{\Omega}$. This issue is dealt with in the first half of Section 2, by using two further standard 2D decouplings and exploiting periodicity. 

The second difference between Theorems 1 and 2 is that the former produces a bilinear estimate, while the latter requires a linear estimate. This issue is addressed in the second part of Section 2, and relies on a variant of the induction on scales from \cite{B-G}.

\bigskip

\noindent
{\bf Remarks.}

\begin{description}
\item [(1.3)]  Obviously (1.2) implies the same inequality for $B_N$ replaced by any translate.

\item
[(1.4)] \ The case $d=2$ is an immediate consequence of te $L^6$-decoupling inequality for planar curves $\Gamma$ of non-vanishing curvature
$$
\Big\Vert \int_0^1 a(t) e\big(x.\Phi (t)\big) dt\Big\Vert_{L^6(B_N)} \ll N^\ve \Big(\sum_\tau \Big\Vert\int_{I_\tau}
a(t) e\big(x.\Phi(t)\big) dt\Big\Vert^2 _{L^6(B_N)}\Big)^{\frac 12}\eqno{(1.5)}
$$
where $\Phi:[0, 1]\to\Gamma \subset \mathbb R^2$ with $|\Phi''|\sim 1$ and $\{I_\tau\}$ as above, established in \cite {B-D1}.
In fact, (1.5) will be the main analytical input required for the proof of (1.2). We mention for future use also the following discrete version of (1.5)
$$\left(\frac{1}{|B_N|}\int_{B_N}|\sum_{n\le N}a_ne(x\cdot\Phi(\frac{n}{N}))|^{6}dx\right)^{1/6}\ll N^{\epsilon}\|a_n\|_{l^2},$$
for each complex coefficients $a_n$.

\item[(1.6)] \ In the language of \cite {B-D1}, \cite {B-D2}, (1.2) may be reformulated as follows.
Let $\Gamma_1, \ldots, \Gamma_{d/2} \subset\Gamma$ be $O(1)$-separated arcs and $f_1, \ldots, f_{\frac d2}\in L^1(\mathbb R^d)$
satisfy supp$\,\widehat{f_j} \subset \Gamma_j+B_{\frac 1N}$.
Denote {$f_\tau =(\hat f|_{\Phi(I_\tau)+B_{\frac 1N}})^\vee$} the Fourier restriction of $f$ to the $\underbrace{\frac 1N\times
\cdots \times \frac 1N}_{d-1}
\times \frac 1{\sqrt N}$ tube \hbox{$\Phi (I_\tau)+B_{\frac 1N}$}.
\end{description}

Then
$$
\Big\Vert\prod^{d/2}_{j=1} |f_j|^{2/d}\Big\Vert_{L_{\#}^{3d}(B_N)} \ll N^{\frac 16+\ve}\prod_{j=1}^{d/2} \Big(\sum_\tau
\Vert f_{j, \tau} \Vert^6_{L^6_{\#} (B_N)}\Big)^{\frac 1{3d}}.\eqno{(1.7)}
$$

\begin{description}
\item[(1.8)] It may be worthwhile to explain the relation between (1.7) and other known decoupling inequalities for curves in $\mathbb R^d$.
\end{description}

Firstly, with $\Gamma$ as above and $\Gamma_1, \ldots, \Gamma_d\subset\Gamma$ $O(1)$-separated, one has a $d$-linear inequality (the analogue of
\cite {B-C-T} for curves)
$$
\Big\Vert\prod^d_{j=1} |f_j|^{\frac 1d}\Big\Vert_{L^{2d}_{\#} (B_N)} \leq c_\Gamma \prod^d_{j=1}
\Big(\sum_\tau\Vert f_{j, \tau}\Vert^2_{L^2_{\#}(B_N)}\Big)^{\frac 1{2d}}.\eqno{(1.9)}
$$

This inequality turns out to be elementary.
Using the fact that the map $I_1\times \cdots \times I_d\to\mathbb R^d:(t_1, \ldots, t_d)\mapsto \Phi(t_1)+\cdots +\Phi(t_d)$ is a diffeomorphism for
$I_1, \ldots, I_d$ $O(1)$-separated by assumption (1.1) and Parseval's theorem, one sees indeed that
$$
\Big\Vert\prod^d_{j=1} \Big|\int_{I_j} a_j(t) e\big(x.\Phi(t)\big)dt \Big|\Big\Vert_{L^2(B_N)} \leq c\prod^d_{j=1} \Vert
a_j\Vert_{L^2(I_j)}.\eqno{(1.10)}
$$
On the other hand, one has the $(d-1)$-linear inequality (see \cite{B-D2})
$$
\Big\Vert \prod^{d-1}_{j=1} |f_j|^{\frac 1{d-1}} \Big\Vert_{L^{2(d+1)}_{\#}(B_N)} \ll N^{\frac 1{2(d+1)}} \prod^{d-1}_{j=1}
\Big[\sum_\tau \Big\Vert f_{j, \tau}\Big\Vert^{\frac {2(d+1)}{d-1}}_{L_{\#}^{\frac{2(d+1)}{d-1}}(B_N)}\Big]^{\frac 1{2(d+1)}}\eqno{(1.11)}
$$
and one observes, for $d$ even, that the pair $\big(2(d+1), \frac {2(d+1)}{d-1)}\big)$ in (1.11) is obtained by interpolation between
the pairs $(2d, 2)$ from (1.9) and $(3d, 6)$ from (1.7).
The issue of what's the analogue of Theorem 1 for odd $d$ will not be considered here.
In fact, our main interest is $d=4$, which provides the required ingredient for the exponential sum application.

Before  passing to the proof of Theorem 1, we make a few preliminary observations.

Note that in the setting of Theorem 1, (1.9) also implies the inequality
$$
\begin{aligned}
&\Big\Vert\prod^{d/2}_{j=1} \Big|\sum_{I_j} a_j(t) e\big(x.\Phi (t)\big) dt\Big|^{2/d}\Big\Vert_{L^d_{\#}(B_N)}\leq\\
&c\prod_{j=1}^{d/2} \Big[\sum_{I_\tau \subset I_j}\Big\Vert \int_{I_\tau} a_j (t) e\big(x.\Phi(t)\big) dt\Big\Vert^2_{L^2_{\#}(B_N)}\Big]^{\frac
1d}.
\end{aligned}
\eqno{(1.12)}
$$
To see this, take $f_j(x)=\frac 1{\sqrt N}\sum_{\substack{0\leq k\leq N\\ \frac kN\in I_j}} \ve_k \, e\big(x.\Phi(\frac kN)\big)$ for $j=\frac d2+1,
\ldots, d$ with $\ve_k=\pm 1$ independent random variables and average over $\{\ve_k\}$, noting that $\mathbb E_\ve [|f_j|^2]
\asymp 1$ and $\mathbb E_\ve [|f_\tau|^2] \asymp  N^{-\frac 12}$.

There is also the trivial bound
$$
\begin{aligned}
&\Big\Vert\prod^{d/2}_{j=1} \Big|\int_{I_j} a_j(t) e\big(x.\Phi(t)\big) dt\Big|^{2/d} \Big\Vert_{L^\infty(B_N)}\leq\\
&N^{\frac 12}\prod^{d/2}_{j=1} \max_{I_\tau \subset I_j}\Big\Vert \int_{I_\tau} a_j(t) e\big(x.\Phi(t)\big)dt
\Big\Vert^{2/d}_{L^\infty (B_N)}.
\end{aligned}
\eqno{(1.13)}
$$

Interpolation between (1.12) and (1.13) using appropriate wave packet decomposition as explained in \cite{B-D1} (note that it is essential here
that the $I_\tau$ are $N^{-\frac 12}$-intervals) gives
$$
\begin{aligned}
&\Big\Vert\prod^{d/2}_{j=1} \Big|\int_{I_j} a_j(t) e\big(x.\Phi(t)\big) dt\Big|^{2/d}\Big\Vert_{L^{3d}_{{\#}}(B_N)} \leq\\
& CN^{\frac 13} \prod^{d/2}_{j=1} \Big[\sum_{I_\tau \subset I_j} \Big\Vert \int_{I_\tau} a_j(t) e\big(x.\Phi (t)\big)
dt\Big\Vert^6_{L^6_{\#}(B_N)} \Big]^{\frac 1{3d}}
\end{aligned}
\eqno{(1.14)}
$$
with $\{I_\tau\}$ a partition in $N^{-\frac 12}$-intervals.

More generally, if $\Delta =\Delta_K\subset\mathbb R^d$ is a $K$-cube, we have (by translation)
$$
\begin{aligned}
&\Big\Vert\prod^{d/2}_{j=1} \Big|\int_{I_j} a_j(t) e\big(x.\Phi(t)\big)dt\Big|^{2/d} \Big\Vert _{L_{\#}^{3d} (\Delta)}\leq\\
&CK^{\frac 13} \prod_{j=1}^{d/2} \big[\sum_{I_\tau\subset I_j} \Big\Vert\int_{I_j} a_j(t)
e\big(x.\Phi(t)\big)dt\Big\Vert^6_{L^6_{\#}(\Delta)}\Big]^{\frac 1{3d}}
\end{aligned}
\eqno{(1.15)}
$$
where $\{I_\tau\}$ is now a partition in $K^{-\frac 12}$-intervals.

The main point of (1.15) is to provide a preliminary $L^6-L^{3d}$ inequality; the prefactor $K^{1/3} $ is not important for
what follows as it will be improved to $K^{\ve+\frac16}$ using a bootstrap argument.

Returning to (1.1), it follows from the mean value theorem that
$$
|\det [\phi_i' (t_j)_{1\leq i, j\leq d}]| \sim\prod_{i\not=j} |t_i-t_j|.\eqno{(1.16)}
$$
By (1.16) and since $\phi''(t)=\lim_{s\to 0} \frac 1s\big(\phi'(t+s)-\phi'(t)\big)$, it follows that for $t_1<\cdots< t_{d/2}\in [0,1]$
$O(1)$-separated,
$$
|\phi'(t_1)\wedge \phi'' (t_1)\wedge \phi'(t_2)\wedge \phi''(t_2)\wedge\cdots\wedge \phi'(t_{\frac d2})
\wedge\phi''(t_{\frac d2})|>c\eqno{(1.17)}
$$
holds.
\bigskip

\noindent
{\bf Proof of Theorem 1.}

Introduce numbers $b(N)>0$ for which the inequality, with arbitrary $\{a_j\}$,
$$
\begin{aligned}
&\Big\Vert\prod^{d/2}_{j=1} \Big|\int_{I_j} a_j(t) e\big(x.\Phi(t)\big)dt\Big|^{2/d} \Big\Vert_{L_{\#}^{3d}(B_N)}\leq\\
&b(N) N^{\frac 16} \prod^{d/2}_{j=1} \Big[\sum_{I_\tau \subset I_j} \Big\Vert\int_{I_\tau} a_j(t) e\big(x.\Phi(t)\big)
dt\Big\Vert^6_{L^6_{\#}(B_N)}\Big]^{\frac 1{3d}}
\end{aligned}
\eqno{(1.18)}
$$
holds.
Our aim is to establish a bootstrap inequality.
By (1.14), $b(N)\leq N^{1/6}$.
With $K<N$ to specify, partition $B_N$ in $K$-cubes $\Delta =\Delta_K$.
We may bound for each $\Delta$ (since the inequalities for $B_K$ and $\Delta_K$ are equivalent)
$$
\begin{aligned}
&\nint_\Delta \prod_{j=1}^{d/2} \Big|\int_{I_j} a_j(t) e\big(x.\Phi(t)\big)dt\Big|^6dx\leq\\
&b(K)^{3d} K^{\frac d2} \prod_{j=1}^{d/2} \Big[\sum_{I_\sigma \subset I_j} \Big\Vert\int_{I_\sigma} a_j(t)
e\big(x.\Phi(t)\big)dt\Big\Vert^6_{L^6_{\#}(\Delta)}\Big]
\end{aligned}
\eqno{(1.19)}
$$
with $\{I_\sigma\}$ a partition in $K^{-\frac 12}$-intervals.
Summation over $\Delta\subset B_N$ implies then
$$
\begin{aligned}
&\nint_{B_N} \prod_{j=1}^{d/2} \Big|\int_{I_j} a_j(t) e\big(x.\Phi(t)\big) dt\Big|^6 dx\leq\\
&b(K)^{3d} K^{\frac d2} \sum_{I_{\sigma_1}\subset I_1, \ldots, I_{\sigma_{d/2}}\subset I_{d/2}}
\nint_{B_K^{d/2}}\Big\{\nint_{B_N} \prod_{j=1}^{d/2} \Big|\int_{I_{\sigma_j}}a_j(t) e\big((x+z_j).\Phi(t)\big) dt\Big|^6 dx\Big\} \prod_j dz_j.
\end{aligned}
\eqno{(1.20)}
$$
Fix $I_{\sigma_j} =[t_j, t_j+K^{-\frac 12}]\subset I_j$ and write for $t=t_j+s\in I_{\sigma_j}$
$$
(x+z_j).\Phi(t)=(x+z_j). \Phi(t_j)+(x+z_j). \Phi'(t_j)s+ \frac 12(x+z_j).\Phi'' (t_j) s^2+o(1)\eqno{(1.21)}
$$
provided
$$
N=o(K^{3/2}).\eqno{(1.22)}
$$
The inner integral in (1.20) may then be replaced by
$$
\nint_{B_N} \prod_{j=1}^{d/2}\Big|\int_0^{K^{-\frac 12}} a_j(t_j+s) e\big((x+z_j).\Phi'(t_j)s+\frac 12 (x+z_j).\Phi'' (t_j)s^2\big)
ds\Big|^6dx\eqno{(1.23)}
$$
the $o(1)$-term in (1.21) producing a harmless smooth Fourier multiplier that may be ignored.

Next, since $t_1<t_2<\cdots < t_{d/2}$ are $O(1)$-separated, (1.17) applies and therefore the map $\mathbb R^d\to\mathbb R^d: x\mapsto
\big(x.\Phi'(t_1), \frac 12 x. \Phi''(t_1), \ldots, x.\Phi'(t_{d/2}), \frac 12 x.\Phi'' (t_{d/2})\big)$ is a linear homeomorphism.
The image measure of the normalized measure on $B_N$ may be bounded by the
normalized measure on $B_{CN}$, up to a factor and
$$
(1.23) \lesssim \prod_{j=1}^{d/2} \nint_{|u|, |v|<CN} \Big|\int_0^{K^{-\frac 12}}
a_j (t_j+s) \, e(us+vs^2) ds\Big|^6 dudv.\eqno{(1.24)}
$$
This factorization is the main point in the argument.

We may now apply (after rescaling $s=k^{-\frac 12}s_1)$ to each factor
in (1.24) the $2D$-decoupling inequality (1.5) with $\Gamma$ the
parabola $(s_1, s_1^2)$ and perform a decoupling at scale $(\frac
KN)^{\frac 12}$.
Thus, by another change of variables,
$$
(1.24) \ll N^\ve \prod_{j=1}^{d/2} \Big[\sum_{I_\tau \subset
I_{\sigma_j}} \Big\Vert\int_{I_\tau} a_j(t) \, e(ut) dt\Big\Vert^2_{L^6_{\#} [|u|<CN]}\Big]^3
$$
with $\{I_\tau\}$ a partition in $N^{-\frac 12}$-intervals
$$
\ll N^\ve\Big(\frac NK\Big)^{\frac d2} \prod_{j=1}^{d/2}
\Big[\sum_{I_\tau \subset I_{\sigma_j}} \Big\Vert \int_{I_\tau} a_j(t) \, e\big(x.\Phi(t)\big)
dt\Big\Vert^6_{L^6_{\#}(B_N)}\Big].\eqno{(1.25)}
$$
Substituting (1.25) in (1.20) leads to the estimate
$$
b(K)^{3d} N^{\frac d2+\ve} \prod^{d/2}_{j=1} \Big[\sum_{I_\tau \subset
I_j}
\Big\Vert\int_{I_\tau} a_j(t)
\, e\big(x.\Phi(t)\big)dt\Big\Vert^6_{L^6_{\#} (B_N)}\Big].\eqno{(1.26)}
$$
Recalling (1.22), one may conclude that
$$
b(N) \ll b(N^{2/3})N^\ve
$$
and Theorem 1 follows by iteration.

\section
{A mean value theorem}

From now on, we focus on $d=4$ (in view of the application to exponential sums) and consider $\Phi:[0, 1]\to \Gamma\subset\mathbb R^4$
satisfying (1.1).
If $I_1, I_2 \subset\{1, \ldots, N\}$ are $\sim N$ separated, we get from Theorem 1
$$
\begin{aligned}
&\Big\Vert \prod^2_{j=1} \Big|\sum _{n\in I_j} a_n \, e\Big(\Phi\Big(\frac nN\Big).x\Big)
\Big|^{\frac 12} \Big\Vert_{L^{12}_{\#}(B_N)}\ll\\
&N^{\frac 16+\ve} \prod^2_{j=1} \Big(\sum_{J\subset I_j}\Big\Vert \sum_{n\in J} a_n e\Big(\Phi\Big(\frac nN\Big).x\Big)\Big\Vert^6_{L^6_{\#}(B_N)}\Big)^{\frac 1{12}}
\end{aligned}
\eqno{(2.1)}
$$
with $\{J\}$ a partition of $\{1, \ldots, N\}$ in $N^{\frac 12}$-intervals.

Again in view of the application, specify
$$
\phi_1(t)=t, \phi_2(t)=t^2\eqno{(2.2)}
$$
and assume
$$
|\phi_3'''|>c.\eqno{(2.3)}
$$
In order to perform a further decoupling in (2.1), we enlarge the domain $B_N$, considering first
$$
\Omega= [0, N]\times [0, N^{3/2}]\times [0, N^{3/2}] \times [0, N]
$$
which we partition in $N$-cubes $\Delta_N$.

Let $I_1, I_2$ be as above.
Application of (2.1) on $\Delta_N$ gives
$$
\begin{aligned}
&\Big\Vert \prod^2_{j=1} \Big|\sum_{n\in I_j} a_n \, e\Big(\Phi\Big(\frac nN\Big).x\Big)\Big|^{\frac
12}\Big\Vert_{L^{12}_{\#}(\Delta_N)}\ll\\
&N^{\frac 16+\ve}\Big[\prod^2_{j=1} \Big(\sum_{J\subset I_j}\Big\Vert\sum_{n\in J} a_n \, e\Big(\Phi\Big(\frac nN\Big).x\Big)
\Big\Vert^6_{L^6_{\#}(\Delta_N)}\Big)\Big]^{\frac 1{12}}
\end{aligned}
$$
and summing over $\Delta_N$
$$
\begin{aligned}
&\Big\Vert \prod^2_{j=1} \Big|\sum_{n\in I_j} \cdots \Big|^{\frac 12} \Big\Vert_{L_{\#}^{12}(\Omega)}\ll\\
&N^{\frac 16+\ve}\Big[\sum_{\substack{J_1\subset I_1\\ J_2\subset I_2}} \nint_{B_N\times B_N} dzdz' \nint_\Omega dx
\Big|\sum_{n\in J_1} a_n \, e\Big(\Phi\Big(\frac nN\Big).(x+z)\Big)\Big|^6 \Big|\sum_{n\in J_2} a_n \, e\Big(\Phi\Big(\frac
nN\Big)(x+z')\Big)\Big|^6\Big]^{\frac 1{12}}.
\end{aligned}
\eqno{(2.4)}
$$
Let $J_1=[h_1, h_1+N^{\frac 12}], J_2=[h_2, h_2+N^{\frac 12}]$ with $h_1-h_2\asymp N$.
Write for $n\in J_1$, $n=h_1+m$, recalling (2.2)
$$
\begin{aligned}
\Phi\Big(\frac nN\Big).(x+z) = & \Phi\Big(\frac {h_1}N\Big).(x+z)+\\
&\frac mN\Big(x_1+z_1+2\frac {h_1}N(x_2+z_2)+\phi_3' \Big(\frac {h_1}N\Big)(x_3+z_3)+\phi_4' \Big(\frac {h_1}N\Big)(x_4+z_4)\Big)+\\
&\frac {m^2}{N^2}\Big( x_2+\frac 12 \phi_3'' \Big(\frac {h_1}N\Big)x_3\Big)+O(1)
\end{aligned}\eqno{(2.5)}
$$
recalling that $|z|,|x_1|,|x_4|<N$ and $|x_2|, |x_3|<N^{3/2}$ while $|m|< N^{\frac 12}$.
Proceed similarly for \hfill\break
$\Phi\Big(\frac nN\Big).(x+z'), n\in J_2$.

Observe that $z_1, z_1'$ have range $[0, N]$, so that periodicity considerations and a change of variables in $z_1, z_1'$ permit to
replace the phase (2.5) by
$$
\frac mNz_1 +\frac {m^2}{N^2}\Big( x_2+\frac 12 \phi_3''\Big(\frac{h_1}N\Big)x_3\Big)
$$
and
$$
\frac mN z_1' +\frac {m^2}{N^2} \Big(x_2+\frac 12 \phi_3'' \Big(\frac {h_2}{N}\Big)x_3\Big).
$$
Since $h_1-h_2\asymp N$ and (2.3), one more change of variables in $x_2, x_ 3$ gives the phases
$$
\begin{cases}
mu_1+\frac {m^2}{N^{1/2}}w_1\\ mu_2 +\frac {m^2}{N^{1/2}}w_2\end{cases}
\eqno{(2.6)}
$$
with $u_1, u_2, w_1, w_2$ ranging in $[0,1]$. Hence we obtain again a factorization
of the integrand in (2.4), i.e.
$$
\begin{aligned}
\int_0^1\int_0^1\int_0^1\int_0^1 du_1 du_2 dw_1 dw_2 &\Big|\sum_{m_1<\sqrt N}
a_{h_1+m_1}\,  e\Big({m_1u_1}+ \frac {m_1^2}{N^{1/2}}w_1\Big) \Big|^6\\
&\Big|\sum_{m_2<\sqrt N} a_{h_2+m_2} \, e\Big(m_2u_2+\frac {m^2_2}{N^{1/2}}w_2\Big)\Big|^6
\end{aligned}
\eqno{(2.7)}
$$
and the $2D$-decoupling result applied to each factor enables to make a further decoupling at scale $N^{1/4}$.
This clearly permits to bound (2.4) by
$$
N^{\frac 16+\ve}N^{\frac 1{12}+\ve}\Big[\sum_{\substack{J_1'\subset I_1\\ J_2'\subset I_2}} \int_0^1\int^1_0 \Big|\sum_{n\in J_1'}
a_n \, e(nu_1)\Big|^6 \Big|\sum_{n\in J_2'} a_n \, e(nu_2)\Big|^6 
du_1 du_2\Big]^{\frac 1{12}}\eqno{(2.8)}
$$
with $\{J'\}$ a partition in $N^{\frac 14}$-intervals. The fact that we have dropped the term $\frac{n^2}{N^{1/2}}w_i$ that appears in (2.7) deserves a word of explanation. Note that if $n\in J_i'$ then $n=h_i+m$ with $m\le N^{1/4}$. 
Thus $\frac{n^2}{N^{1/2}}w=\big(\frac{h^2}{N^{1/2}}+\frac{2hm}{N^{1/2}}\big)w+O(1)$, which permits us to replace in (2.7) the argument by
$m\big(u+\frac{2h}{N^{1/2}} w\big)$ and hence $mu$ by change of variable.

If instead we consider a translate $\Omega+y$ of $\Omega$, the expression  (2.8) needs to be modified replacing $a_n$
by $a_n\, e\big(\Phi(\frac nN).y\big)$.

Finally, consider the domain (according to Huxley's $A_6$-problem)
$$
\tilde\Omega =[0, N]\times[0, N^2]\times [0, N^2]\times [0, N]
$$
which we partition in domains $\Omega_\alpha =\Omega+y_\alpha$ with $\Omega$ as above.
Thus for each $\alpha$ (2.8) implies
$$
\begin{aligned}
&\Big\Vert\prod^2_{j=1} \Big|\sum_{n\in I_j}\cdots \Big|^{\frac 12}\Big\Vert_{L^{12}_{\#}(\Omega_\alpha)}\ll\\
&N^{\frac 14+\ve}\Big[\sum_{\substack{J_1'\subset I_1\\ J_2'\subset I_2}} \int_0^1\int^1_0 \Big|\sum_{n\in J_1'} a_n
\, e\Big(\Phi\Big(\frac nN\Big).y_\alpha\Big) e(nu_1)\Big|^6 \, \Big|\sum_{n\in J_2'}\cdots \Big|^6 du_1 du_2\Big]^{\frac 1{12}}
\end{aligned}
$$
and
$$
\begin{aligned}
&\Big\Vert\prod^2_{j=1} \Big|\sum_{n\in I_j}\cdots \Big|^{\frac 12} \Big\Vert_{L^{12}_{\#}(\tilde\Omega)} \ll\\
&N^{\frac 14+\ve} \Big[\sum_{\substack {J_1'\subset I_1\\ J_2'\subset I_2}}\nint_{\tilde{\Omega}}\int^1_0\int^1_0\Big|\sum_{n\in J_1'}
a_n \, e\Big(\Phi\Big(\frac nN\Big).y+nu_1\Big)\Big|^6 \, \Big|\sum_{n\in J_2'} \cdots \Big|^6 dydu_1du_2\Big]^{\frac 1{12}}.
\end{aligned}
\eqno{(2.9)}
$$

Proceeding as before, let $J_1'=[h_1, h_1+N^{\frac 14}], J_2' =[h_2, h_2+N^{\frac 14}], h_1-h_2\asymp N$.

Write for $n\in J_1', n=h_1+m$
$$
\begin{aligned}
&\Phi\Big(\frac nN\Big).y+nu_1=\\
&\Phi\Big(\frac {h_1}N\Big).y+h_1u_1+\\
&\qquad m\Big(u_1+ \frac {y_1}N+ 2\Big(\frac {h_1}N\Big)y_2+\frac 1N\phi_3'\Big(\frac {h_1}N\Big) y_3+\frac 1N\phi_4'\Big(\frac {h_1}N\Big)
y_4\Big)\\
&\qquad +\frac {m^2}{N^2} \Big(y_2+\frac 12\phi_3''\Big(\frac{h_1} N\Big) y_3\Big) +O(1).
\end{aligned}
$$
Since $|y_2|, |y_3|<N^2, |y_4|<N \text { and } |m|<N^{\frac 14}$.

Again by periodicity, (2.3) and change of variables,  we obtain the phases
$$
mu_1+m^2 w_1
$$
and
$$
mu_2+m^2w_2
$$
with $u_1, u_2, w_1, w_2 \in [0, 1]$ and the $L^6$-norms are bounded by the $\ell^2$-norms of the coefficients (see Remark
 1.4).
In conclusion, we proved that
$$
\Big\Vert\prod^2_{j=1} \Big|\sum_{n\in I_j} a_n \, e\Big(\Phi\Big(\frac nN\Big).x\Big)\Big|^{\frac 12} \Big\Vert_{L^{12}_{\#} (\tilde\Omega)}
\ll N^{\frac 12+\ve}\Vert\bar a\Vert_\infty\eqno{(2.10)}
$$
with $\Phi$ satisfying (1.1), (2.2), (2.3), i.e.
$$
\phi_1(t) =t, \phi_2(t) =t^2, |\phi_3''|>c \text { and } \Big|\left|\begin{matrix} \phi_3'''(s)&\phi_4'''(s)\\ \phi_3''''(t) &
\phi_4'''' (t)\end{matrix}\right|\Big| >c \text { for } s, t\in [0, 1].\eqno{(2.11)}
$$

The following statement is the mean value estimate for $A_6$ in \cite{H}.

\begin{theorem}
$$
\int_0^1\int_0^1\int_{-1}^1\int_{-1}^1\Big|\sum_{n\leq N}
e(nx_1+n^2x_2+N^{\frac 12} n^{3/2}x_3+N^{\frac 12} n^{\frac 12} x_4)\Big|^{12} dx_1 dx_2dx_3dx_4\ll N^{6+\ve}.\eqno{(2.12)}
$$
\end{theorem}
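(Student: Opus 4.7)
I would deduce Theorem~2 from the bilinear estimate (2.10) by a Bourgain--Guth induction on scales. After the substitution $(x_1,x_2,x_3,x_4)=(y_1/N,y_2/N^2,y_3/N^2,y_4/N)$ the integrand in (2.12) becomes $|\sum_{n\le N}e(\Phi(n/N)\cdot y)|^{12}$ with $\Phi(t)=(t,t^2,t^{3/2},t^{1/2})$ and $y$ varying in $\tilde\Omega=[0,N]\times[0,N^2]\times[0,N^2]\times[0,N]$, so that (2.12) is equivalent to $\mathcal A(N)\ll N^{1/2+\varepsilon}$, where
$$
\mathcal A(N)\;=\;\sup_{\|\bar a\|_\infty\le 1}\Big\|\sum_{n\le N}a_n\,e\bigl(\Phi(n/N)\cdot y\bigr)\Big\|_{L^{12}_\#(\tilde\Omega)}.
$$
Since $\Phi$ satisfies (2.11), the bilinear bound (2.10) is at my disposal as the basic analytical input.

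Fix a slowly growing parameter $K=N^\delta$ with $\delta=\delta(\varepsilon)>0$ small, partition $\{1,\ldots,N\}=\bigsqcup_{k=1}^K I_k$ into consecutive intervals of length $M=N/K$, and decompose $f=\sum_k F_k$ with $F_k(y)=\sum_{n\in I_k}a_n\,e(\Phi(n/N)\cdot y)$. At each $y$ invoke the standard Bourgain--Guth dichotomy: either (\emph{broad}) there exist $k_1,k_2$ with $|k_1-k_2|\ge K/C$ and $|F_{k_1}(y)F_{k_2}(y)|\gtrsim K^{-C}|f(y)|^2$, or (\emph{narrow}) $|f(y)|\lesssim K^C\max_k|F_k(y)|$. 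In the broad case the underlying intervals $I_{k_1},I_{k_2}\subset\{1,\ldots,N\}$ are $\sim N$-separated, so (2.10) applied to this pair yields $\||F_{k_1}F_{k_2}|^{1/2}\|_{L^{12}_\#(\tilde\Omega)}\ll N^{1/2+\varepsilon}$; summing over the $O(K^2)$ admissible pairs contributes $K^{O(1)}N^{1/2+\varepsilon}$ to $\mathcal A(N)$.

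In the narrow case $|f(y)|^{12}\lesssim K^{O(1)}\sum_k|F_k(y)|^{12}$, so it suffices to bound $\|F_k\|_{L^{12}_\#(\tilde\Omega)}$ for a single $k$. Writing $n=m_k+l$ with $l\in[1,M]$ and $t_k=m_k/N$, and Taylor-expanding $\Phi(t_k+l/N)\cdot y$ around $t_k$, the terms of order $\ge j_0=\lceil 2/\delta\rceil$ in $l/N$ contribute $O(1)$ to the phase (using $|y_j|\lesssim N^2$) and are absorbed into a smooth Fourier multiplier; the remaining phase is polynomial in $\tau=l/M$ with coefficients depending affinely on $y$. An explicit linear change of variables in $y$, whose Jacobian is polynomial in $K$ (as one checks from (2.11) and the definition of $\tilde\Omega$), recasts $F_k$ as $\sum_l a_{n(l)}\,e(\Psi_{t_k}(\tau)\cdot z)$ where $\Psi_{t_k}$ is a non-degenerate curve -- its low-order Wronskian being inherited from $\Phi$ at $t_k$ -- and $z$ ranges in a domain comparable with $\tilde\Omega(M)$. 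Invoking the inductive hypothesis $\mathcal A(M)\ll M^{1/2+\varepsilon}$ for this class of curves then gives $\|F_k\|_{L^{12}_\#(\tilde\Omega)}\ll K^{O(1)}M^{1/2+\varepsilon}$, and summing in $k$ produces the recursion $\mathcal A(N)\ll K^{O(1)}N^{1/2+\varepsilon}$; choosing $\delta$ small enough that $K^{O(1)}=N^{O(\delta)}\le N^\varepsilon$ closes the induction.

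The principal obstacle is precisely the narrow-case rescaling: because $\Phi(t)=(t,t^2,t^{3/2},t^{1/2})$ is \emph{not} self-similar under affine rescaling (in contrast with the parabola driving the standard Bourgain--Guth argument in $d=2$), one must verify that the Taylor truncation around $t_k$ yields a curve $\Psi_{t_k}$ obeying the analogue of (2.11) and that the accompanying change of variables in $y$-space sends $\tilde\Omega(N)$ into a domain comparable with $\tilde\Omega(M)$ modulo polynomial-in-$K$ distortion. This is the heart of the ``variant of the induction on scales from [B-G]'' referenced in the introduction, and it is what forces $K$ to be only a slowly growing function of $N$.
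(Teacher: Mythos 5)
Your overall strategy (a Bourgain--Guth dichotomy reducing the linear $L^{12}$ bound to the bilinear estimate (2.10)) is the same as the paper's, but your broad case has a genuine gap: the estimate (2.10) is \emph{not} applicable to the curve $\Phi(t)=(t,t^2,t^{3/2},t^{1/2})$ itself. Indeed (2.10) requires (2.11), and for $\phi_3=t^{3/2}$, $\phi_4=t^{1/2}$ one computes
$\det\left(\begin{smallmatrix}\phi_3'''(s)&\phi_4'''(s)\\ \phi_3''''(t)&\phi_4''''(t)\end{smallmatrix}\right)=\tfrac{9}{128}\,s^{-5/2}t^{-7/2}(5s-3t)$,
which vanishes whenever $5s=3t$ (and this determinant is, up to the factor $2$ coming from the first two rows, exactly the Wronskian (1.1) for curves with $\phi_1=t,\phi_2=t^2$). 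So Theorem~1 and its consequence (2.10) simply do not apply to the pair $F_{k_1},F_{k_2}$ as you propose, quite apart from the additional degeneracy of $t^{1/2},t^{3/2}$ near $t=0$. The paper's way around this is the content of (2.13)--(2.23): it only ever considers a pair of intervals of length $\sim M$ that are $\sim M$-separated \emph{and} contained in a common window $[N_0,N_0+O(KM)]$ with $N_0\gg M$; Taylor-expanding $(N_0+m)^{3/2}$ and $(N_0+m)^{1/2}$ about $N_0$, absorbing the linear and quadratic terms into the first two coordinates of the counting system (2.14)--(2.15), one is left with a \emph{new} curve $(t,t^2,\phi_3,\phi_4)$ with $\phi_3\sim t^3$, $\phi_4\sim t^4$, which does satisfy (2.11); the mismatch of frequency ranges produces the factors $\{1+N_0^{3/2}N^{-1/2}M^{-1}\}\{1+N_0^{7/2}N^{-1/2}M^{-3}\}$ in (2.23), giving the bilinear bound $N^{4+\ve}M^2$. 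Because this bilinear input is intrinsically scale-relative (separation $\sim M$ inside a window of length $O(M)$ far from the origin), a single dichotomy at the top scale with $\sim N$-separated pieces cannot invoke it; the dichotomy must be iterated through all scales $M_\alpha=N/K^{\alpha+1}$.

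This also changes the narrow-case bookkeeping, where your argument is fragile for a second reason. The paper takes $K$ to be a large \emph{constant}, uses the rearrangement inequality (2.25) so that the narrow term carries the absolute constant $4^{12}$ (not $K^{O(1)}$), and never rescales the linear problem for the interior intervals: it simply keeps subdividing, the bilinear contribution at level $\alpha$ being $\ll N^{6+\ve}(4^{12}/K)^\alpha$, summable once $K>4^{12}$. The only genuine recursion in scale is for the single interval $I_0=[0,100N/K]$ adjacent to the origin (where the curve rescales exactly), yielding $b(N)\le 2^{12}100^6K^{-6}b(100N/K)+C_\ve N^\ve$. In your version, with $K=N^\delta$ and a narrow-case loss $K^{O(1)}$, the gain from passing to scale $M=N/K$ in the $L^{12}_\#$ normalization is only $K^{-1/2-\ve}$ per step (equivalently $K^{-5}$ in the $12$th-power normalization after summing over $k$), so the induction closes only if the unspecified exponent in $K^{O(1)}$ is small enough; as written this is not verified, and the rescaling of an interior piece $[h_k,h_k+N/K]$ back to a curve of the admissible class is precisely the Taylor-expansion step above, not an affine self-similarity. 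I would redo the argument following the paper's structure: prove the scale-relative bilinear bound (2.23) first, then run the multi-scale dichotomy with constant $K$.
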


\begin{proof}

Let $I\subset [1, N]$ be an interval of the form $[N_0, N_0+M]$, $100 M<N_0\leq N$, and assume $I_1, I_2\subset I$ subintervals of size $\sim
M$ that are $\sim M$-separated.
\end{proof}

We first estimate
$$
\int\Big\{\prod^2_{j=1} \Big|\sum_{n\in I_j} e(nx_1+n^2x_2+N^{1/2} n^{3/2} x_3+N^{1/2}n^{1/2} x_4)\Big|^6\Big\} dx.\eqno{(2.13)}
$$
Clearly (2.13) amounts to the number of solutions of the system
$$
\left\{\!\!\!\!
\begin{array}{lll}
&m_1+m_2+m_3-m_4-m_5-m_6 = m_7+m_8+m_9- m_{10}-m_{11}-m_{12}&{(2.14)}\\
&m_1^2+m_2^2+m_3^2-m^2_4-m_5^2-m_6^2 = m^2_7+m_8^2+m_9^2-m_{10}^2-m_{11}^2-m_{12}^2&{(2.15)}\\
&(N_0+m_1)^{3/2}+(N_0+m_2)^{3/2}+(N_0+m_3)^{3/2}-(N_0+m_4)^{3/2}-(N_0+m_5)^{3/2}-(N_0+m_6)^{3/2}= &{(2.16)}\\
&(N_0+m_7))^{3/2}+(N_0+m_8)^{3/2}+(N_0+m_9)^{3/2}-(N_0+m_{10})^{3/2} -
(N_0+m_{11})^{3/2}-(N_0+m_{12})^{3/2}+O(N^{-\frac 12})\\
&(N_0+m_1)^{\frac 12}+\cdots -(N_0+m_6)^{1/2}=\\
&(N_0+m_7)^{\frac 12}+\cdots- (N_0+m_{12})^{\frac 12}+O(N^{-\frac 12}).&{(2.17)}
\end{array}
\right.
$$
with $m_1, \ldots, m_6 \in I_1' =I_1-N_0; m_7, \ldots, m_{12} \in I_2' =I_2-N_0$.

Write $(N_0+m)^{3/2}, (N_0+m)^{1/2}$ in the form
$$
(N_0+m)^{3/2} =N_0^{3/2}+\frac 32 N_0^{\frac 12}m+\frac 38 N_0^{-\frac 12} m^2 +M^3 N_0^{-3/2} \phi_3 \Big(\frac mM\Big)
\qquad\qquad\qquad\qquad\eqno{(2.18)}
$$
$$
(N_0+m)^{1/2}=N_0^{1/2}+\frac 12 N_0^{-\frac 12}m-\frac 18 N_0^{-\frac 32} m^2-M^3 N_0^{-5/2} \phi_3 \Big(\frac mM\Big)+M^4 N_0^{-7/2} \phi_4
\Big(\frac mM\Big)\eqno{(2.19)}
$$
where $\phi_3(t) \sim t^3\Big(1+O\Big(\frac M{N_0}\Big)t+\cdots\Big), \phi_4(t)\sim t^4$.

Hence $\Phi(t)= \big(t, t^2, \phi_3(t), \phi_4(t)\big)$ satisfies (2.11).

From (2.14), (2.15), (2.18), (2.19), inequalities (2.16), (2.17) may be replaced by
$$
\phi_3\Big(\frac {m_1}M\Big)+\cdots+\phi_3 \Big(\frac {m_{12}}M\Big)<O(N^{-\frac 12}N_0^{3/2} M^{-3})\eqno{(2.20)}
$$
$$
\phi_4\Big(\frac{m_1}M\Big)+\cdots+\phi_4 \Big(\frac {m_{12}}M\Big)<O(N^{-\frac 12}N_0^{7/2} M^{-4}).\eqno{(2.21)}
$$
The number of solutions of (2.14), (2.15), (2.20), (2.21) may be evaluated by
$$
\int_{[-1,1]^4}\Big\{\prod_{j=1}^2\Big|\sum_{m\in I_j'} e(mx_1+m^2x_2+\frac {N^{\frac 12} M^3}{N_0^{3/2}} \phi_3\Big(\frac mM\Big) x_3 +
\frac {N^{\frac 12} M^4} {N_0^{7/2}} \phi_4 \Big(\frac mM\Big) x_4\Big)\Big|^6\Big\} dx.\eqno{(2.22)}
$$
According to (2.10), (2.22) and hence (2.13) are bounded by
$$
M^{6+\ve} \Big\{ 1+\frac {N_0^{3/2}}{N^{\frac 12}M}\Big\} \, \Big\{ 1+\frac {N_0^{7/2}}{N^{1/2}M^3}\Big\}\ll N^{4+\ve}M^2.\eqno{(2.23)}
$$
Returning to (2.12), let $b (N)N^6$ be a bound on the left hand side
We use the same reduction procedure to multi-linear (here bi-linear) inequalities as in \cite{B}, \cite{B-D2} (and originating from \cite{B-G}).
Denote $K$ a large constant and partition $[0, N]$ in intervals $I_0, I_1, \ldots, I_K$, where $|I_0|= \frac {100N}K$ and $|I_s|=\big( 1-\frac
{100}K\big) \frac NK=M_0$ for $1\leq s\leq K$.

Bound
$$
\int\Big|\sum_{n\leq N}\Big|^{12} \leq 2^{12} \int\Big|\sum_{n\in I_0} \Big|^{12} +(2K)^{12} \sum_{1\leq s\leq K}\int \Big|\sum_{n\in I_s}
\Big|^{12}.\eqno{(2.24)}
$$
The first term of (2.24) is bounded by $2^{12} 100^6 K^{-6} b\Big(\frac {100N}K\Big) N^6$.

For the remaining terms, write $I_s=[N_s, N_s+M_0]$, $N_s>100 M_0$, and make a further partition of $I_s$ in consecutive intervals $I_{s,
1},\ldots, I_{s, K}$ of size $M_1=\frac{M_0}K$.
The key point (going back to \cite {B-G}) is an estimate of the from
$$
\int\Big|\sum_{n\in I_s} \Big|^{12} \leq 4^{12} \sum_{s_1\leq K}\int \Big|\sum_{n\in I_{s, s_1}} \Big|^{12}
+K^{18} \sum_{\substack{ s_1, s_2 \leq K\\ |s_1-s_2|\geq 2}} \int\Big\{\Big|\sum_{n\in I_{s, s_1}} \Big|^6\, \Big|\sum_{n\in I_{s, s_2}}\Big|^6\Big\}.\eqno{(2.25)}
$$
Recall that (2.25) follows from considering the (pointwise in $x$) decreasing rearrangement $\eta_1\geq \eta_2\geq\cdots\geq \eta_K$ of the
sequence $\big(\big|\sum_{n\in I_{s, s_1}}\big|\big)_{1\leq s_1\leq K}$ and distinguishing the cases $\eta_4 <\frac 1{K_1}\eta_1$ and
$\eta_4\geq \frac 1{K_1}\eta_1$.
\medskip

Application of (2.23) gives for $|s_1-s_2|\geq 2$
$$
\int\Big\{\Big|\sum_{n\in I_{s, s_1}}\Big|^6 \, \Big|\sum_{n\in I_{s, s_2}}\Big|^6\Big\}\ll N^{6+\ve}
$$
and hence the second sum in (2.25) contributes at most for $C(K)N^{6+\ve}$.
Replace the second term in the r.h.s. of (2.24) by
$$
(2K)^{12} 4^{12} \sum_{s\leq K, s_1\leq K}\int\Big|\sum_{n\in I_{s, s_1}} \Big|^{12}.
$$
Repeating the procedure, partition each $I_{s, s_1}$ in intervals $I_{s, s_1, s_2}$ of size $M_2=\frac {M_1}K$ and apply the decomposition
(2.25) for each $\sum_{n\in I_{s, s_1}}$ etc.

In general, one gets bilinear contributions of the form
$$
(2K)^{12} 4^{12\alpha}K^{18} \sum_{J, J'} \int\Big\{ \Big|\sum_{n\in J} \Big|^6 \, \Big|\sum_{n\in J'}\Big|^6\Big\}.
\eqno{(2.26)}
$$
where the sum extends over pairs $J, J'$ of intervals of size $M_\alpha =\frac N{K^{\alpha+1}}, \alpha\geq 1$ that are at least $M_\alpha$-separated
and contained in an interval of the form $[N_0, N_0+KM_\alpha], KM_\alpha < \frac 1{100} N_0$.
Again by (2.23)
$$
\int\Big\{\Big|\sum_{n\in J}\Big|^6\, \Big|\sum_{n\in J'} \Big|^6\Big\} \ll N^{4+\ve} M^2_\alpha
$$
implying that
$$
(2.26) \ll C(K) 4^{12\alpha} \frac N{M_\alpha} N^{4+\ve} M_\alpha^2 \ll N^{6+\ve}\Big(\frac {4^{12}}K\Big)^{\alpha}.
$$
Summing over $\alpha$ eventually leads to the bound
$$
2^{12} 100^6 K^{-6} b\Big(\frac {100N}K\Big)N^6+N^{6+\ve}.\eqno{(2.27)}
$$
On the l.h.s. of (2.24). Therefore
$$
b(N)\leq 2^{12} 100^6K^{-6} b\Big(\frac {100N}K\Big) +C_\ve N^\ve
$$
implying $b(N)\ll N^\ve$ and Theorem 2.
\medskip

Using the notation from \cite{H}, Theorem 2 implies

\begin{corollary}
Let $\frac 1{N^2} \leq\delta \leq 1, \frac 1N\leq\Delta\leq 1$.
Then
$$
A_6(N, \delta, \Delta)=\int_0^1 \int_0^1\int^1_{-1}\int^1_{-1} 
\Big|\sum_{n\leq N} e\Big(nx_1+ n^2x_2+\frac 1\delta \Big(\frac nN\Big)^{3/2} x_3 +\frac 1\Delta\Big(\frac nN\Big)^{1/2}
x_4\Big)\Big|^{12} dx\ll \delta\Delta N^{9+\ve}.\eqno{(2.28)}
$$
\end{corollary}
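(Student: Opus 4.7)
The plan is to derive Corollary 3 from Theorem 2 by a direct linear change of variables in $x_3$ and $x_4$ that rescales the phase in (2.28) into the form appearing in (2.12). The hypotheses $\delta \ge 1/N^2$ and $\Delta \ge 1/N$ will turn out to be exactly what is needed to keep the rescaled integration domain inside the box handled by Theorem 2.

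Concretely, I would substitute $y_3 = x_3/(\delta N^2)$ and $y_4 = x_4/(\Delta N)$. A one-line computation gives
$$\tfrac{1}{\delta}(n/N)^{3/2}x_3 = N^{1/2}n^{3/2}y_3, \qquad \tfrac{1}{\Delta}(n/N)^{1/2}x_4 = N^{1/2}n^{1/2}y_4,$$
so the exponential sum inside $|\cdot|^{12}$ becomes literally the one appearing in (2.12). The Jacobian of the substitution contributes a factor of $\delta\Delta N^3$.

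Under this change of variables the region $x_3,x_4\in[-1,1]$ is mapped onto $y_3\in[-A,A]$, $y_4\in[-B,B]$ with $A=1/(\delta N^2)$ and $B=1/(\Delta N)$. The assumptions $\delta\ge 1/N^2$ and $\Delta\ge 1/N$ give precisely $A\le 1$ and $B\le 1$, so the new $(y_3,y_4)$-domain is contained in $[-1,1]^2$. Enlarging the domain to $[-1,1]^2$ by positivity of the integrand and applying Theorem 2 yields
$$A_6(N,\delta,\Delta) \;\le\; \delta\Delta N^{3}\int_0^1\!\!\int_0^1\!\!\int_{-1}^{1}\!\!\int_{-1}^{1}|S_{T2}|^{12}\,dx \;\ll\; \delta\Delta N^{3}\cdot N^{6+\varepsilon} \;=\; \delta\Delta N^{9+\varepsilon}.$$

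The argument really is this short, so no genuine obstacle arises: Corollary 3 is a pure rescaling consequence of Theorem 2. As a sanity check, the extremal case $\delta=1/N^2$, $\Delta=1/N$ makes the substitution the identity ($A=B=1$, Jacobian $=N^{-3}$), and the predicted bound $\delta\Delta N^{9+\varepsilon}=N^{6+\varepsilon}$ reduces to Theorem 2 itself, confirming the scaling.
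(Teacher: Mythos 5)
Your rescaling argument is correct and is exactly the intended derivation: the paper states Corollary 3 as an immediate consequence of Theorem 2 without further detail, and the substitution $y_3=x_3/(\delta N^2)$, $y_4=x_4/(\Delta N)$, combined with positivity of the integrand to enlarge the domain to $[-1,1]^2$, is the standard way to see it. (Only a cosmetic slip in your sanity check: when $\delta=1/N^2$, $\Delta=1/N$ the Jacobian factor $\delta\Delta N^3$ equals $1$, not $N^{-3}$; the conclusion is unaffected.)
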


Considering the major arc contribution, (2.28) is clearly seen to be essentially best possible.

\section
{\bf Applications to exponential sums}

Let $F$ be a smooth function on $[\frac 12, 1]$ satisfying, for some constant $c\in (0, 1]$,  the condition
$$
\min\{|F''(x)|, |F'''(x)|, |F^{''''} (x)|\}>c.\eqno{(3.1)}
$$
Given $T$ sufficiently large, $M\geq 1$, put : $ f(u)=TF (u/M)$ with $\frac M2\leq u\leq M$ and
$$
S=\sum_{m\sim M} e\big(f(m)\big). \eqno{(3.2)}
$$
In what follows, we assume $M\leq \sqrt T$, in view of the application to $|\zeta(\frac 12+it)|$.
We use notation and background from \cite {H} and also rely on [H-W] and \S 7 and \S 8 in [H1].

Once the parameter $N\in (1, M)$ is chosen,  $R$ is defined by the relation
$$
R=\Big\lceil\Big( \frac {2M^3}{cNT}\Big)^{1/2}\Big\rceil,\eqno{(3.3)}
$$
so that, for each relevant size-$N$ interval $I\subset [M/2, M]$, the corresponding `arc' $J(I)=\{f''(u)/2: u\in I\}$ will be an interval of length exceeding $1/R^2$.
We assume that $N$ and $R$ satisfy the conditions
$$
R\leq N\leq R^2
$$
(given that $\sqrt T\geq M$, these conditions imply that $2\leq R\leq N\ll M^{3/2}T^{-1/2}\ll MT^{-1/4}\leq M^{1/2}$ and $N\gg MT^{-1/3}\gg 1$).
By following (in all but certain inessential respects) the steps of \S4 in [H-W] that precede Equation (4.8) there, while slightly modifying the application of the
lemma on `partial sums by Fourier transforms' (i.e. [H-W], Lemma 3.4), one obtains a result implying that, for some $Q, \ell, H, \alpha\in\mathbb C$ satisfying
$$
Q, H\in\mathbb N, \ell \in \{0, 1, 2\}, \alpha \in \{e(-\eta): -1/2 \leq \eta\leq 1/2\}, Q\geq R\gg \sqrt Q \ 
\text { and } \ H\geq NQ/R^2\gg H,
$$ 
one has an upper bound
$$
|S| \ll \frac {M\log N}{N^{1/2}}+ \frac {R\log^2 N}{Q^{1/2}}\sum_{I\in \mathcal I(Q, \ell)}  \Big(\Big|\sum_{h\leq H}
\alpha^h e \big({\bf x}(I)\cdot \big(h, h^2, h^{3/2}, h^{1/2}\big)\big)\Big|+\frac QR\Big)\eqno{(3.4)}
$$
in which ${\bf x}$ is a certain mapping from the set $\mathcal I(Q, \ell)$ into $[-X_1, X_1]\times \cdots \times [-X_4, X_4]\subset \mathbb R^4$, where
$$
X_1= X_2 =\frac 12 \ \text { and } \ X_3 =X_4 =\Big(\frac RQ\Big)^2 H^{1/2},
$$
while $\mathcal I(Q, \ell)$ is the set of those $I\in \{[kN, N+kN]: k\in\mathbb N$ and $M/(2N)\leq k\leq (M-3N)/N\}$ that, via the  procedures set out in [H-W], \S4,
Step 1, are associated with a reduced rational $a/q\in J(I)$ that happens to satisfy $Q\leq q< 2Q$ and $Q\equiv \pm \ell (\text{mod\,} 4)$ (in the terminology of [H-W], \S4, Step 1, these $I$'s are `minor arcs').
The details of the `second spacing problem' are not amongst the main points of interest in this paper, so we skip the definition of 
${\bf x} (I)$ and mention only that
this element of $\mathbb R^4$ is essentially identical to the vector ${\bf y}={\bf y}^{(k)}$ defined in [H-W], \S4 Step 4 (the index $`k'$ there corresponds to our `$I$').

By an appropriate application of [H-W], Lemma 2.1, one finds that
$$
\sum^2_{\ell=0}|\mathcal I(Q, \ell)| \ll \frac {MR^2}{NQ^2}+\frac {R^2} Q\asymp \frac {MR^2}{NQ^2}.
$$
Given this estimate, that in (3.4) and the trivial upper bound for the modulus of the sum over $h$ in (3.4), it follows by the sixth-power H\"older inequality that,
either
$$
|S|\ll \frac {M\log ^2N}{N^{1/2}}\eqno{(3.5)}
$$
or else
$$
R\leq Q< R^{2/3} N^{1/3}\leq N\eqno{(3.6)}
$$
and one has
$$
|S|^6\ll \Big(\frac {R\log^2 N}{Q^{1/2}}\Big)^6 \Big(\frac {MR^2}{NQ^2}\Big)^5\sum_{I\in\mathcal I(Q, \ell)} \sum_{{\bf h}\in \mathbb N^6} e\big(
{\bf x}(I)\cdot {\bf y}({\bf h})\big) \omega (I)\Omega ({\bf h}), \eqno{(3.7)}
$$
where
$$
{\bf y}({\bf h})=\sum^6_{j=1} (h_j, h_j^2, h_j^{3/2}, h_j^{1/2})= (h_1+\cdots+h_6, \ldots, h_1^{1/2}+\cdots + h_6^{1/2})\in \mathbb R^4,
$$
while $\omega$ is a certain complex-valued function that takes values that are (without exception) of modulus not exceeding unity, as
does $\Omega({\bf h})$ 
 (which is equal to $\alpha^{y_1({\bf h})}$ if $h_j \leq H$ for $j=1, \ldots, 6$, and is zero otherwise).

Suppose now that (3.6) and (3.7) hold.
Then, similarly to what is observed at the end of `Step 4', in [H-W], \S4, it follows by the Bombieri-Iwaniec `double large sieve' [B-I1], Lemma 2.4 (or see [H-W],
Lemma 3.6) that one has
$$
\Big|\sum_{I\in\mathcal I(Q, \ell)} \,  \sum_{{\bf h}\in \mathbb N^6} e\big({\bf x}(I)\cdot {\bf y}({\bf h})\big)\omega(I) \Omega({\bf h})
\Big|^2\ll AB_1 \prod^4_{j=1} (X_j Y_j+1).\eqno {(3.8)}
$$
where
$$
\begin{aligned}
&Y_1=6H, \qquad Y_2=6H^2, \qquad Y_3=6H^{3/2}, \qquad Y_4=6H^{1/2},\\
B_1= &\big|\big\{I, I'): I, I'\in\mathcal I(Q, \ell) \ \text { and } \ |x_j(I)-x_j(I')|<\frac 1{2Y_j} (j=1, \ldots, 4)\big\}\big|
\end{aligned}
\eqno{(3.9)}
$$
and
$$
A=\Big|\big\{ ({\bf h}, {\bf h}'):{\bf h}, {\bf h}'\in(\mathbb N\cap (0, H])^6 \ \text { and } \ |y_j({\bf h})-y_j({\bf h}')|
< \frac 1{2X_j} (j=1, \ldots, 4)\big\}\Big|.
$$
It is worth remarking here that the conditions on $({\bf h}, {\bf h}')$ in the above definition of the number $A$ actually imply the
equality of the ordered pairs $\big(y_1({\bf h}), y_2({\bf h})\big)$ and $\big(y_1({\bf h}'), y_2({\bf h}')\big)$ (both of which lie in $\mathbb Z^2$): hence the traditional definition of the `first spacing problem' as a question concerning the order of magnitude of the number of
solutions in integers $h_1, h_1',\ldots, h_r, h_r'\in (0, H]$ of a certain system of two equations and two inequalities \big(see for example [H], (11.1.1)-(11.1.5)\big).
Given that $|y-y'|<\frac 1{2X}$ implies $|y-y'|<\frac 1X$ (whenever $X, y, y'\in \mathbb R)$, it is a corollary of [H], Lemma 5.6.5 (for example) that we have here:
$$
\begin{aligned}
0\leq A&\leq \Big(\frac {\pi^2}2\Big)^4 \Big(\frac 1{X_1\cdots X_4}\Big) \int^{X_1}_{-X_1} \int^{X_2}_{-X_2}\int^{X_3}_{-X_3} \int_{-X_4}^{X_4}\Big|
\sum_{\substack{{\bf h}\in \mathbb N^6\\ h_j\leq H(j=1, \ldots, 6)}} e({\bf y}({\bf h}) \cdot 
(z_1, \ldots, z_4)\big)\Big|^2 dz_1dz_2dz_3dz_4\\
&=\frac {\pi^8Q^4}{4HR^4}\int^1_0\int^1_0 \int^{\frac {R^2\sqrt H}{Q^2}}_{-\frac {R^2\sqrt H}{Q^2}} 
\int^{\frac {R^2\sqrt H}{Q^2}}_{-\frac{R^2\sqrt H}{Q^2}}\Big|
\sum_{h\leq H} e\big(( h, h^2, h^{3/2}, h^{1/2}) \cdot (z_1, \ldots, z_4)\big)\Big|^{12} dz_1dz_2dz_3dz_4\\
&=(\pi^8/4)A_6(H;\delta, H\delta).
\end{aligned}
$$
with $A_6(L; \gamma, \Gamma)$ defined according to (2.28), and with
$$
\delta =\frac {Q^2}{H^2R^2}
$$
so that $1/H\leq H\delta\leq Q/N\leq 1$.
Therefore it follows by Corollary 3 that we have
$$
A\ll \delta^2 H^{10+\ve}.\eqno{(3.10)}
$$
With regard to the `second spacing problem' one uses the treatment of the second spacing problem in [H-W], rather than the more advanced treatment in [H1]).
One obtains
$$
B_1\ll \Delta_1\Delta_2 \Big(\frac MN\Big)^2 \Big(\frac QR\Big)^4 
\text { if }  \ N=MT^{-2/7}
$$
where
$$
\Delta_1=\frac 1{X_2Y_2}= \frac 1{3H^2}<\frac {R^4}{N^2Q^2} \ \text { and } \ \Delta_2 =\frac 1{X_3Y_3} =\frac {Q^2}{6R^2 H^2}<\frac {R^2}{N^2}.
$$
Hence, with $N=MT^{-2/7}$
$$
B_1\ll \frac {M^2 R^2Q^2}{N^6}\eqno{(3.11)}
$$

It follows from (3.5)-(3.8), (3.10) and (3.11) that
$$
|S|^6\ll \max \Big\{ \frac {M^{6+\ve}}{N^3}, \Big(\frac {M^{5+\ve} R^4N}{Q^7} \Big ) \Big(\frac {MRQ}{N^3}\Big)\Big\} \leq 
 \Big(\frac {M^{6+\ve}}{N^3}\Big)  \Big(\frac NR\Big) \ \text { for } N=MT^{-2/7}.\eqno{(3.12)}
$$
Recalling (3.3) the above bound for $|S|^6$ implies
$$
|S|^6\ll\Big(\frac {M^{6+\ve}}{N^3}\Big)\Big(\frac {N^3T}{M^3}\Big)^{1/2} =\frac {M^{\ve+9/2} T^{1/2}}{N^{3/2}}= M^{3+\ve} T^{13/14},
$$
If $\sqrt T\geq M\geq c T^{3/7}$ (where $c$ is the positive constant in (3.1) and (3.3)) then the conditions $N\in (1, M)$ and $R\leq N\leq R^2$ are satisfied, with
$N=MT^{-2/7}$ and $R$ as in (3.3).
That is, we have:
$$
|S|\ll M^{\frac 12} T^{\ve+13/84} \  \text { for } \ \sqrt T\geq M\geq cT^{3/7}.\eqno{(3.13)}
$$

The $M$-range for which the bound (3.13) holds may be extended by invoking the treatment in [H1] which we discuss next.

It was observed by Huxley, at the start of \S7 in [H1], that for an arbitrary $V\geq 1$ the structure of the Bombieri-Iwaniec `double large sieve' implies that if the factor $X_2Y_2+1$
on the right-hand side of the bound (3.8) is increased to $X_2Y_2V+1\leq (X_2Y_2+1)V$ then the adjacent term $B_1$ may be replaced by a term $B_V\leq B_1$, the definition of which
differs from that of $B_1$ \big(in (3.9)\big) only insofar as it involves an upper bound on $|x_2(I)-x_2(I')|$ that is stronger by a factor $V$ than is the case in (3.9).
This observation plays a crucial part in Huxley's method of `resonance curves', through which the most recent progress [H1], [H], [H2] on the `second spacing problem' was achieved;
we apply it here, in combination with (3.10) and the bounds $\prod_{j\leq 4}(X_jY_j+1)=\hfill\break
(3H+1) (3H^2+1) (6\delta^{-1}+1) (6(H\delta)^{-1}+1)\ll H^2/\delta^2$, in order to deduce that
$$
|S|^6\ll \Big(\frac {M^5R^{16} \log^{12}N}{N^5Q^{13}}\Big) H^{6+\ve}(VB_V)^{1/2}\ll \Big(\frac {M^5R^4N^{1+\ve}}{Q^7}\Big) (VB_V)^{1/2} \quad (V\geq 1).\eqno{(3.14)}
$$

In [H1] Huxley invented an approach to the `second spacing problem' based on a theory involving certain `resonance curves'.
In his first application of this, in [H1], \S7, he obtained a result implying that, if $M\leq \sqrt T$ (as we suppose), and if one has either $V=N/Q\ll R^4/N^2$, or else $V=R^4/N^2$
(so that $V\geq 1$ in either case, given that $Q\leq N\leq R^2)$, then
$$
VB_V\ll \Big(\frac {VMR^2}{NQ^2}+\Delta_1\Delta_2\Delta_4^{2/3} \Big(\frac MN\Big)^2\Big) \Big(\frac QR\Big)^4\eqno{(3.15)}
$$
where $\Delta_1, \Delta_2$ are as above, while
$$
\Delta_4=\frac 1{X_4Y_4}= \frac {Q^2}{6R^2H}<\frac QN.
$$
By (3.14) and (3.15), one obtains:

$$
\begin{aligned}
|S|^6&\ll \Big(\frac{M^{5+\ve} R^4N}{Q^7}\Big)\Big(\min\Big\{\Big(\frac NQ\Big)^{1/2}, \frac {R^2}N\Big\} 
\Big(\frac MN\Big)^{1/2}+\Big(\frac QR\Big)^{1/3} \Big(\frac RN\Big)^{7/3}\Big(\frac MN\Big)\Big)\\
&\leq \Big(\frac {M^{5+\ve}N}{R^3}\Big)\Big(\min\Big\{\Big(\frac NR\Big)^{1/2}, \frac {R^2}N\Big\}\Big(\frac MN\Big)^{1/2} +\Big(\frac RN\Big)^{7/3} \Big(\frac MN\Big)\Big)\\
&=\min \Big\{\frac {M^{\ve+11/2}N}{R^{7/2}} , \frac{M^{\ve+11/2}}{RN^{1/2}}\Big\}+\Big(\frac {M^{6+\ve}}{N^3}\Big)\Big(\frac NR\Big)^{2/3}
\end{aligned}
\eqno{(3.16)}
$$
\big(with $M^{\ve+11/2}/(RN^{1/2})\asymp M^{4+\ve} T^{1/2}$, by virtue of (3.3)\big). 
When $M>T^{11/30}$ and $R$ is given by (3.3), the upper bound (3.16) may be optimized by putting $N=\max \{MT^{-17/57}, M^{1/2}T^{-1/12}\}$: for each such $M, T, N$ and $R$, the bound
(3.16) is equivalent to:
$$
|S|^6 \ll \Big(\frac{M^{6+\ve}}{N^3}\Big)\Big(\frac NR\Big)^{2/3}\asymp \frac{M^{5+\ve}T^{1/3}}{N^2}.\eqno{(3.17)}
$$
One can check that if $T$ is sufficiently large (in terms of $c^{-1})$, if $\sqrt T\geq M\geq T^{5/12}$, and if $N$ is as assumed in (3.17), then $N$ does satisfy our initial assumptions
\big(that $1< N<M$ and $R\leq N\leq R^2$, with $R$ given by (3.3)\big).
In fact our optimal choice of $N$ in connection with the application of (3.15) is, unsurprisingly, identical to the choice of $N$ found to be optimal in [H1], \S8.
The same is true in cases where $M<T^{5/12}$.
The problem with such cases is that the choice of $N$ assumed in (3.17) is too large to satisfy the condition $N\leq R^2$.
The solution to this problem (utilized in [H1]) is to switch to a smaller value of $N$ satisfying $N\leq R^2\ll N$.
This is achieved here by putting $N=\big(2M^3/(cT)\big)^{1/2}$, which satisfies all of our assumptions considering $N$ and $R$ whenever $T^{5/12}\geq M\geq 2(cT)^{1/3}$.
It moreover follows from (3.16) and (3.3) that one obtains (3.17) for this alternate choice of $N$ (satisfying $N\asymp R^2$).

By the above observations, and the further observation that $MT^{-17/57}\gg M^{1/2}T^{-1/12}$ if and only if $M\gg T^{49/114}$, we arrive at the upper bounds.
$$
|S|^6 \ll \begin{cases}  M^{3+\ve}T^{53/57} \ & \text { if } \ \sqrt T\geq M > T^{49/114};\\
M^{4+\ve}T^{1/2} &\text { if } T^{49/114}\geq M\geq T^{5/12};\\
M^{2+\ve} T^{4/3} &\text { if } T^{5/12} > M\geq   2(cT)^{1/3}.
\end{cases}\eqno{(3.18)}
$$
Note that this bound is $\min\{M^{3+\ve} T^{53/57}, M^{4+\ve}T^{1/2}\}$ when $\sqrt T\geq M\geq T^{5/12}$.

Using (3.18) if $M<T^{3/7}$ (noting the inequalities $\frac {49}{114}>\frac 37 >\frac 5{12}$), one verifies that the bound on $|S|$ in (3.13) holds if 
$\sqrt T\geq M\geq T^{17/42}$.
Thus we establish
\medskip

\noindent
{\bf Theorem 4.}
{\sl With the above notation, one has that
$$
|S|\ll M^{1/2} T^{\ve+13/84} \  \text { if }  \  \frac 12 \geq \alpha =\frac {\log M}{\log T} \geq \frac {17}{42}.\eqno{(3.19)}
$$}

\section
{Bounding the zeta-function on the critical line}

For the application to $|\zeta(1/2+ it)|$ one wants to show that (3.19) also holds when $17/42>\alpha\geq 0$.
The cases with $0\leq \alpha \leq 13/42$ are trivial (there one can just use $|S|\leq M)$, so all that remains to be done is establishing
that (3.19) holds when $\alpha$ lies in the interval $(13/42, 17/42)$.
To achieve this one can employ the bound
$$
|S|\ll T^{\frac 1{128} (4+103\alpha)+\ve} \qquad (12/31 <\alpha\leq 1),\eqno {(4.1)}
$$
which is [H1], Theorem 3, in combination with the exponent pair estimate
$$
|S|\ll \Big(\frac TM\Big)^{1/9} M^{13/18} =M^{11/18} T^{1/9} \quad (0\leq \alpha \leq 1),\eqno{(4.2)}
$$
which corresponds to the exponent pair $(\frac 19, \frac {13}{18})= ABA^2B(0, 1)$ mentioned in [T], \S5 20.
It should be noted that (4.2) \big(and also (4.1)\big) assume additional hypotheses concerning the function $F$, beyond condition (3.1).
This, however, is not an obstacle to the application to $|\zeta(1/2+it)|$, since that only requires consideration of cases in which $F(x)=\log x$ \big(a function that does
satisfy all the unmentioned conditions attached to (4.1) and (4.2)\big).
Assume henceforth that $F$ is ``a suitable function''. such that (4.1) and (4.2) are applicable.
A calculation shows that (3.19) is implied by (4.1) for all $\alpha$ in the interval $(12/31, 332/819]$, and is implied by (4.2) for all $\alpha$ in the interval 
$[0, 11/28]$: noting that
$11/28=0.39285 \ldots > 0.38709\ldots = 12/31$, we find that the union of these two intervals is $[0,332/819]=[0,0.40537\ldots]\supset (0.30952\ldots, 0.40476\ldots)
=(13/42, 17/42)$.
\medskip

By the preceding one has the bound (3.19) whenever $0\leq \alpha\leq 1/2$ (at least this is so in the case $F(x) =\log x)$.
It follows from the `approximate functional equation' for $\zeta (s)$ in the critical strip, see [T] (4.12.4),
that
$$
\Big|\zeta\Big(\frac 12+it\Big)\Big|\leq 2\Big|\sum_{n\leq \sqrt {t/2\pi} } n^{-\frac 12+it} \Big|+ O(1) \ (t\to \infty).\eqno{(4.3)}
$$
From partial summation and dyadic dissection, Theorem 5 now follows in the usual way.

\medskip
\noindent
{\bf Remark.}

Bombieri and Iwaniec achieved the exponent $9/56 =(1-1/28)/6$ using an essentially optimal bound on $A_4(\delta, H\delta)$, so that the
exponent $13/84 =(1-1/14)/6$ \big(achieved with an essentially optimal  bound for $A_6(\delta, H\delta)$\big)
represents exactly a doubling of Bombieri and Iwaniec's improvement over the classical `1/6' (with their essentially optimal bound on $A_5 (\delta, H\delta)$ Huxley and Kolesnik, in 1991,
got the exponent $17/108=(1-1/18)/6$, and would in fact have got $11/70=(1-2/35)/6$, except for the fact that cases with $\alpha$ near to 23/54 were a problem at that time).

\section
{Further comments}

Recalling (3.2), the preceding shows that one has the estimate
$$
|S|\ll M^{\frac 12} T^{\ve+\frac {13}{84}} \ \text { if } \ \frac 12 \geq \alpha =\frac {\log M}{\log T}>0\eqno{(5.1)}
$$
provided $f$ is in the class of functions to which the exponent pair theory applies (see for instance [G-K], Ch. 3 for details).
In fact

\medskip
{\bf Theorem 6.}
{\sl $\big(\ve+\frac{13}{84}, \ve +\frac {55}{84}\big)$ is an exponent pair.}
\medskip

One needs to obtain the bound $|S|\ll (T/M)^{\frac {13}{84}+\ve} M^{\frac {55}{84}+\ve}= M^{\frac 12} T^{\frac {13}{84}+\ve}$ subject to
conditions that are weaker, in two respects, than the conditions under which direct application of (5.1) gives this bound on $|S|$.
More specifically

\begin{itemize}
\item [(a)] the relevant `$M$' may exceed the square root of the relevant `$T$' (although one will at least not have $M>T$);

\item[(b)] the summation may not be over $[M/2, M]\cap\mathbb Z$ 
\big (it may just be over some set $[a, b]\cap \mathbb Z$, where $[a, b]$ is some proper subset of $(M/2, M)\big)$.
Moreover the function $f$ that one is `given' may only be defined on the subinterval $[a, b]$ (this is, for example, what occurs in 
the theory of exponent pairs developed in [G-K]).
\end{itemize}

As a first step to getting around the problem (a), one can note that in the absence of problem (b) the desired result in any cases with 
$T<M^2\ll T$ can be seen to follow from (5.1).
For in such cases one may replace $c, F, T$ by $c_1= TM^{-2}c, F_1=TM^{-2} F$ and $T_1=M^2$ without invalidating (3.1) or causing any change in
the value of the sum $S$.

Secondly, just to secure any extreme cases, one can deal with the cases in which $M\geq T^{9/10}$ (say) simply by an appeal to the exponent pair $(1/2, 1/2)$ (this is 
analogous to using the trivial bound $|S|\leq M$ when $\alpha$ is in a neighborhood of 0).

Cases with $T^{1/2}<M< T^{9/10}$ become manageable after they are converted, through Poisson summation and partial summation, into cases involving (in place of $M$ and $T$)
an $M'\asymp T/M$ and a $T' \asymp T$, so that one has $(T')^{1/10}\ll M^1 \ll (T')^{1/2}$.
The details of this conversion are essentially the same as what goes on in the verification of the `$B$-process' of exponent pair theory (see, for example, [G-K], \S3.5); its efficacy,
in disposing of problem (a), is related to the fact that, when $0\leq k\leq \frac 12$ and $l=k+\frac 12$, 
one has $B(k, l):=(l-\frac 12, k+\frac 12)=(k, l)$.

Problem (b) is also remarked upon in Sargos's paper [S] (see the remark on p. 310).
Firstly one constructs a suitable extension of the function $f(x)$, so that the resulting function $f_1(x)$
has domain $[M/2, M]$, is identical to $f(x)$ on the subinterval $[a, b]$, and satisfies (on $[M/2, M])$ the requisite set of conditions on its derivatives (these
conditions being such as to make the theory of exponent pairs applicable).
If $s>0$ and $yx^{-s}$ is the relevant `monomial' approximation to $f'(x)$ on $[a, b]$ (such as must be present when the exponent pair
theory is applicable), then it is enough to consider an extension $f_1$ of $f$ that, for $b<x\leq M$, satisfies $f_1(x)= y\int_b^x u^{-s}
du+a_0+a_1 x+\cdots + a_Px^P$, where $a_0, a_1, \ldots, a_P$ are certain constants (determined by the requirement that $f_1^{(P)}(x)$ be
continuous at $x=b$): given that the derivatives $f'(x), \ldots, f^{(P)} (x)$ satisfy the requisite conditions on the interval $[a, b]$ (for
which see [G-K], Condition (3.3.3)) one may deduce that the constants $a_1, \ldots, a_P$ are small enough not to prevent those same
conditions being satisfied by $f_1'(x), \ldots, f_1^{(P)} (x)$ on the longer interval $[a, M]$.
By a similar construction one obtains an extension of $f$, (and so also of $f$) that has domain $[M/2, M]$ and is of the class to which the
exponent pair theory applies.

Once the extension of $f$ to $\big[\frac M2, M\big]$ is obtained, one can employ [S], lemma 2.1 to solve problem (b) at the cost of 
losing a harmless factor $O(\log M)$ in the final estimate.

\end{document}